\documentclass[12pt,a4]{amsart}
\usepackage{amsmath}
\usepackage{amssymb}
\usepackage{amsthm}
\usepackage{enumitem}
\usepackage{url}
\usepackage{soul}
\usepackage{times}
\usepackage[T1]{fontenc}

\usepackage{color}
\usepackage{dsfont}
\usepackage{epsfig}
\usepackage[hidelinks]{hyperref}
\usepackage{setspace}
\usepackage{epstopdf}
\usepackage[authoryear,round,sort]{natbib}
\usepackage{comment}
\usepackage{booktabs}
\usepackage{float}
\usepackage{tikz}
\usepackage{algorithm}
\usepackage{stmaryrd}
\numberwithin{equation}{section}

\usepackage[font=small,labelfont=bf]{caption}
\captionsetup[table]{skip=20pt}
\DeclareCaptionFont{tiny}{\tiny}
\setlength\belowcaptionskip{5pt}
\setcounter{MaxMatrixCols}{10}


\usepackage{geometry}
\geometry{headsep=15pt}
\setlength{\oddsidemargin}{20mm} \setlength{\evensidemargin}{20mm}
\setlength{\voffset}{-1in} \setlength{\hoffset}{-1in}
\setlength{\textwidth}{175mm} \setlength{\topmargin}{0mm}
\setlength{\headheight}{15mm}
\setlength{\headsep}{10mm}
\setlength{\topskip}{0mm}
\setlength{\textheight}{225mm}
\theoremstyle{plain}

\newtheorem{theorem}{Theorem}[section]
\newtheorem{lemma}{Lemma}[section]

\theoremstyle{definition}
\newtheorem{definition}{Definition}[section]
\newtheorem{assumption}{Assumption}[section]
\newtheorem{example}{Example}[section]

\theoremstyle{remark}
\newtheorem{rk}{Remark}[section]
\expandafter\let\expandafter\oldproof\csname\string\proof\endcsname
\let\oldendproof\endproof
\renewenvironment{proof}[1][\proofname]{%
  \oldproof[\noindent\textbf{#1.} ]%
}{\oldendproof}
\newcommand{\1}{\mathds{1}}

\newcommand{\E}{\mathbb{E}}

\newcommand{\be}{\begin{equation}}
\newcommand{\ee}{\end{equation}}
\newcommand{\by}{\begin{eqnarray*}}
\newcommand{\ey}{\end{eqnarray*}}

\renewcommand{\leq}{\leqslant}
\renewcommand{\geq}{\geqslant}
\usepackage{xcolor}
\definecolor{dark-red}{rgb}{0.4,0.15,0.15}
\definecolor{dark-blue}{rgb}{0.15,0.15,0.4}
\definecolor{medium-blue}{rgb}{0,0,0.5}
\hypersetup{
    colorlinks, linkcolor={red},
    citecolor={blue}, urlcolor={blue}
}
\allowdisplaybreaks

\begin{document}
\title{Velocity formulae between entropy and hitting time for Markov chains}
\author{Michael C.H. Choi}\thanks{}
\address{Institute for Data and Decision Analytics, The Chinese University of Hong Kong, Shenzhen, Guangdong, 518172, P.R. China}
\email{michaelchoi@cuhk.edu.cn}
\date{\today}
\maketitle


\begin{abstract}
	In the absence of acceleration, the velocity formula gives ``distance travelled equals speed multiplied by time''. For a broad class of Markov chains such as circulant Markov chains or random walk on complete graphs, we prove a probabilistic analogue of the velocity formula between entropy and hitting time, where distance is the entropy of the Markov trajectories from state $i$ to state $j$ in the sense of [L. Ekroot and T. M. Cover. The entropy of Markov trajectories. IEEE Trans. Inform. Theory 39(4): 1418-1421.], speed is the classical entropy rate of the chain, and the time variable is the expected hitting time between $i$ and $j$. This motivates us to define new entropic counterparts of various hitting time parameters such as average hitting time or commute time, and prove analogous velocity formulae and estimates between these quantities. 
	\smallskip
	
	\noindent \textbf{AMS 2010 subject classifications}: 60J10
	
	\noindent \textbf{Keywords}: entropy; hitting time; commute time; eigentime; random walk
\end{abstract}



\section{Introduction and main results}

Suppose a particle moves from a point $i$ and to another point $j$. In elementary physics, the classical velocity formula yields the distance between $i$ and $j$ is equal to the speed of the particle multiplied by the time taken, provided that the particle has no acceleration. For the class of Markov chains with constant row entropy, the main aim of this note is to prove analogues of the velocity formula where ``distance'' is replaced by various entropic quantities, ``speed'' is the entropy rate associated with the chain and ``time'' is substituted by various hitting time related parameters such as average hitting time and commute time.

Before we discuss our main results in Theorem \ref{thm:veloform} and Theorem \ref{thm:mainresult2} below, we first fix our notations and provide a quick review on the relevant background. Our notations follow closely those of \cite{KGT13,CJ06,EC93}. Throughout this paper, we consider a discrete-time homogeneous irreducible finite Markov chain $X = (X_n)_{n \in \mathbb{N}}$ on state space $\mathcal{X}$ with transition matrix $P = (P_{i,j})_{i,j \in \mathcal{X}}$ and stationary distribution $\pi = (\pi_i)_{i \in \mathcal{X}}$. The entropy rate $H(X)$ of the Markov chain $X$ is defined to be
$$H(X) := - \sum_{i,j \in \mathcal{X}} \pi_i P_{i,j} \log P_{i,j} = \sum_{i \in \mathcal{X}} \pi_i H(P_{i,\cdot}),$$
where $H(P_{i,\cdot}) := - \sum_{j \in \mathcal{X}} P_{i,j} \log P_{i,j}$ is the one-step local entropy at state $i$, and the usual convention of $0 \log 0 = 0$ applies. $H(X)$ can be broadly interpreted as the average entropy produced by a single step of $X$, and this interpretation is particularly useful in understanding our main results. Another entropic quantity that we are interested in is the so-called entropy of the Markov trajectories $H_{i,j}$ from state $i$ to state $j$, as studied by \cite{EC93,KGT13}. Define a trajectory $T_{i,j}$ from $i$ to $j$ as a path with initial state $i$, final state $j$ with no intervening state equal to $j$. We denote such trajectory by $T_{i,j} = ix_1x_2\ldots x_{k-1}j$. The probability of $T_{i,j}$ is $p(T_{i,j}) := P_{i,x_1}P_{x_1,x_2}\ldots P_{x_{k-1},j}$. Writing $\mathcal{T}_{i,j}$ as the set of all possible trajectories from $i$ to $j$, $H_{i,j}$ is then defined to be
$$H_{i,j} = H_{i,j}(X) := -\sum_{T_{i,j} \in \mathcal{T}_{i,j}} p(T_{i,j}) \log p(T_{i,j}).$$

We now move on to discuss a few hitting time related parameters of $X$. Define $\tau_j := \inf\{n \geq 0;~X_n = j\}$ to be the first hitting time of the state $j$, and $\tau_j^+ := \inf\{n \geq 1;~X_n = j\}$ to be the first return time of the state $j$. The usual convention applies in these definitions with $\inf \emptyset = \infty$.

In our main results below, we primarily consider Markov chains with constant row entropy. In essence, this means that the Markov chain has zero entropic acceleration as it moves from one state to another since each state gives the same local entropy $H(P_{i,\cdot})$.

\begin{assumption}[Constant row entropy]\label{assump:constentr}
	We assume that $X$ has constant row entropy, i.e. for all $i,j \in \mathcal{X}$, $H(P_{i,\cdot}) = H(P_{j,\cdot})$.
\end{assumption}

Examples of such Markov chains can be found in Section \ref{sec:examples}, where we apply our results to two-state Markov chains (Example \ref{ex:twostate}), random walk on complete graphs (Example \ref{ex:complete}), rank-one Markov chains (Example \ref{ex:rankone}) and simple random walks on $n$-cycle (Example \ref{ex:ncycle}). Note that random walk on regular graphs and circulant Markov chains \cite{ACMM13} also fall into this category.

With the above notations and setting, we are now ready to state our main result. In a broad sense, it can be interpreted as the entropy of the trajectories from $i$ to $j$ equals the entropy per step times the mean hitting time between the two states.

\begin{theorem}[Velocity formula between entropy and hitting time]\label{thm:veloform}
	Assume that $X$ satisfies Assumption \ref{assump:constentr} with constant row entropy. For any $i,j \in \mathcal{X}$, we have
	\begin{align*}
	H_{i,j} =
	\begin{cases}
	\mathbb{E}_i(\tau_j) H(X), & \text{for } i \neq j,\\
	\mathbb{E}_i(\tau_i^+) H(X), & \text{for } i = j.
	\end{cases}
	\end{align*}
\end{theorem}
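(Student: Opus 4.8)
The plan is to decompose the trajectory entropy $H_{i,j}$ by conditioning on the successive one-step transitions along the path, exploiting the chain rule for Shannon entropy. Write a trajectory $T_{i,j} = i x_1 x_2 \ldots x_{k-1} j$; its probability factors as $p(T_{i,j}) = P_{i,x_1} P_{x_1,x_2}\cdots P_{x_{k-1},j}$, so $-\log p(T_{i,j})$ is a sum of the terms $-\log P_{\cdot,\cdot}$ along the path. Taking expectations over $\mathcal{T}_{i,j}$ with weights $p(T_{i,j})$, one gets
\begin{align*}
H_{i,j} = \sum_{T_{i,j}\in\mathcal{T}_{i,j}} p(T_{i,j}) \sum_{\ell} \bigl(-\log P_{x_{\ell-1},x_\ell}\bigr),
\end{align*}
where the inner sum runs over the edges of the trajectory. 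The idea is to interchange the order of summation and count, for each state $s$ visited strictly before the path terminates at $j$, the expected number of visits to $s$ multiplied by the local entropy $H(P_{s,\cdot})$ produced at that visit.

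More precisely, I would introduce $G_{i,j}(s) := \mathbb{E}_i\bigl[\#\{0 \le n < \tau_j : X_n = s\}\bigr]$, the expected number of visits to $s$ before hitting $j$ (for $i=j$, replace $\tau_j$ by $\tau_i^+$ and count visits at times $0 \le n < \tau_i^+$). The key algebraic identity to establish is
\begin{align*}
H_{i,j} = \sum_{s \in \mathcal{X}} G_{i,j}(s)\, H(P_{s,\cdot}).
\end{align*}
This follows because each time the chain is at state $s$ (before absorption at $j$) it contributes, in expectation over its next step, exactly $H(P_{s,\cdot})$ to $-\log p(T_{i,j})$; summing the geometric-type contributions over all visits and all states $s$ gives the displayed formula. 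Making this rigorous requires justifying the interchange of the (possibly infinite) sums, which is legitimate since all terms are nonnegative. Then, invoking Assumption \ref{assump:constentr}, $H(P_{s,\cdot}) \equiv H(X) / 1$ is independent of $s$ — more precisely $H(P_{s,\cdot})$ is the common constant value $h$, and since $H(X) = \sum_i \pi_i H(P_{i,\cdot}) = h$, we get $H(P_{s,\cdot}) = H(X)$ for every $s$. Pulling this constant out yields
\begin{align*}
H_{i,j} = H(X) \sum_{s\in\mathcal{X}} G_{i,j}(s) = H(X)\, \mathbb{E}_i(\tau_j)
\end{align*}
for $i \neq j$, since $\sum_s G_{i,j}(s) = \mathbb{E}_i\bigl[\#\{0 \le n < \tau_j\}\bigr] = \mathbb{E}_i(\tau_j)$; the case $i=j$ is identical with $\tau_j$ replaced by $\tau_i^+$.

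I expect the main obstacle to be the clean derivation of the intermediate identity $H_{i,j} = \sum_s G_{i,j}(s) H(P_{s,\cdot})$, i.e. setting up the bookkeeping of trajectory edges versus state visits carefully and handling convergence. One efficient route is to define $H_{i,j}$ recursively via a first-step analysis: for $i \neq j$, $H_{i,j} = H(P_{i,\cdot}) + \sum_{k \neq j} P_{i,k} H_{k,j}$, which encodes "entropy of the first step plus expected entropy of the remaining trajectory"; the analogous recursion for $\mathbb{E}_i(\tau_j)$ is $\mathbb{E}_i(\tau_j) = 1 + \sum_{k\neq j} P_{i,k}\mathbb{E}_k(\tau_j)$. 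Under Assumption \ref{assump:constentr}, substituting $H_{i,j} = c_{i,j} H(X)$ into the first recursion reduces it exactly to the hitting-time recursion for $c_{i,j}$, and uniqueness of the solution (the linear system is nonsingular by irreducibility and finiteness of $\mathcal{X}$) forces $c_{i,j} = \mathbb{E}_i(\tau_j)$. This recursive approach sidesteps the infinite-sum interchange entirely and is probably the cleanest way to write the argument; the diagonal case $i=j$ follows by conditioning on the first step out of $i$ and applying the off-diagonal result.
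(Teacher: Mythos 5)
Your proposal is correct, but it takes a genuinely different route from the paper. The paper's proof handles $i=j$ by quoting Ekroot--Cover's identity $H_{i,i}=H(X)/\pi_i=\E_i(\tau_i^+)H(X)$, and for $i\neq j$ it starts from their fundamental-matrix formula $H_{i,j}=(ZB)_{i,j}-(ZB)_{j,j}$ with $Z=(I-P+\Pi)^{-1}$, rewrites $Z_{i,k}=Z_{k,k}-\pi_k\E_i(\tau_k)$ to obtain the intermediate expression $H_{i,j}=\sum_k \pi_k(\E_j(\tau_k)-\E_i(\tau_k))H(P_{k,\cdot})+\E_i(\tau_j)H(X)$ (Lemma \ref{lem: Hij}), and then kills the first sum under Assumption \ref{assump:constentr} via the random target lemma. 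You instead argue directly from the path decomposition: either through the occupation-time (Wald-type) identity $H_{i,j}=\sum_s G_{i,j}(s)H(P_{s,\cdot})$, which is valid by Tonelli plus finiteness of $\E_i(\tau_j)$ and of the per-step entropies, or through the first-step recursion $H_{i,j}=H(P_{i,\cdot})+\sum_{k\neq j}P_{i,k}H_{k,j}$ (a legitimate use of the entropy chain rule together with the Markov property) matched against $\E_i(\tau_j)=1+\sum_{k\neq j}P_{i,k}\E_k(\tau_j)$, with uniqueness of the solution coming from invertibility of $I-P$ restricted to $\mathcal{X}\setminus\{j\}$; your treatment of the diagonal case by one further first-step conditioning is also fine. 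Your argument is more elementary and self-contained (no fundamental matrix, no random target lemma, no appeal to \cite{EC93}), and your identity $H_{i,j}=\sum_s G_{i,j}(s)H(P_{s,\cdot})$ in fact holds without the constant-row-entropy assumption and is equivalent to the paper's Lemma \ref{lem: Hij} via the classical formula $G_{i,j}(s)=\pi_s(\E_i(\tau_j)+\E_j(\tau_s)-\E_i(\tau_s))$; what the paper's route buys is precisely that lemma in the $\pi$-and-hitting-time form, which it reuses for the commute-entropy formula in Theorem \ref{thm:mainresult2}\eqref{it:come}. Two small points to tidy up: the substitution $H_{i,j}=c_{i,j}H(X)$ degenerates when $H(X)=0$, but then the chain is deterministic and the statement is trivial (as the paper notes); and in the recursion route you should state explicitly that the entropy of the random trajectory is finite (it is, since $\E_i(\tau_j)<\infty$ and each step contributes at most $\log|\mathcal{X}|$), so the chain-rule manipulations over the countable trajectory space are justified.
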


Note that for a deterministic Markov chain $X$, Theorem \ref{thm:veloform} trivially holds since $H_{i,j} = H(X) = 0$. Motivated by the relation between $H_{i,j}$ and $\E_i(\tau_j)$, we proceed to define a few new entropic parameters which are similar to their hitting time counterparts. We refer interested readers to \cite{LPW09,AF14,MT06} for excellent discussion on these parameters as well as their estimates.

\begin{definition}[Average entropy $H^{av}$, average hitting time $t^{av}$ and relaxation time $t^{rel}$]\label{def:aveavt}
	The average entropy and average hitting time are defined to be respectively 
	$$H^{av} = H^{av}(X) := \sum_{i,j \in \mathcal{X}} \pi_i \pi_j H_{i,j}, \quad t^{av} = t^{av}(X) := \sum_{i,j \in \mathcal{X}} \pi_i \pi_j \E_i(\tau_j).$$
	For reversible Markov chain $X$, a closely related parameter is the relaxation time 
	$$t^{rel} := \dfrac{1}{1-\lambda_2},$$
	where $1 = \lambda_1 > \lambda_2 \geq \ldots \geq \lambda_n$ are the eigenvalues of reversible $P$ arranged in non-increasing order and $n := |\mathcal{X}|$.
\end{definition}

\begin{definition}[Commute entropy $H^{c}_{i,j}$ and commute time $t^{c}_{i,j}$]\label{def:cect}
	For any $i,j \in \mathcal{X}$, the commute entropy and commute time between $i$ and $j$ are defined to be respectively
	$$H^{c}_{i,j} = H^{c}_{i,j}(X) := H_{i,j} + H_{j,i}, \quad t^{c}_{i,j} = t^{c}_{i,j}(X) := \E_i(\tau_j) + \E_j(\tau_i).$$
\end{definition}

We note that average entropy and average hitting time are both global parameters, while commute entropy and commute time are parameters associated with a given pair of states. In our second main result below, we give velocity formula between these parameters and carry a few results of hitting time to their entropic counterparts.

\begin{theorem}\label{thm:mainresult2}
	\begin{enumerate}
		\item(Commute entropy velocity formula)\label{it:come}
		For any $i \neq j \in \mathcal{X}$, we have
		$$H^c_{i,j} = t^c_{i,j} H(X).$$ 
		Note that this holds in general and does not require the constant row entropy assumption.
		
		\item(Average entropy velocity formula)\label{it:ave} Under the constant row entropy assumption \ref{assump:constentr}, we have
		$$H^{av} = (t^{av} + 1)H(X).$$
		If in addition $X$ is reversible, then
		$$(t^{rel} + 1) H(X) \leq H^{av} \leq ((|\mathcal{X}|-1)t^{rel} + 1) \log |\mathcal{X}|.$$
		
		\item(Entropic random target lemma)\label{it:rtl} Under the constant row entropy assumption \ref{assump:constentr}, 
		$$\sum_{j \in \mathcal{X}} \pi_j H_{i,j}$$
		does not depend on $i \in \mathcal{X}$.
		
		\item(Entropic cyclic tour lemma)\label{it:ctl} Under the constant row entropy assumption \ref{assump:constentr}, if $X$ is reversible then for any $i \neq j \neq k \in \mathcal{X}$
		$$H_{i,j} + H_{j,k} + H_{k,i} =  H_{i,k} + H_{k,j} + H_{j,i}.$$
	\end{enumerate}
\end{theorem}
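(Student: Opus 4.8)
The plan is to obtain items~\ref{it:come}--\ref{it:ctl} from two recursions --- the one underlying Theorem~\ref{thm:veloform},
\[
H_{i,j} = H(P_{i,\cdot}) + \sum_{k\neq j} P_{i,k} H_{k,j},
\]
and its hitting-time analogue $\E_i(\tau_j) = 1 + \sum_{k\neq j} P_{i,k}\E_k(\tau_j)$ --- together with one new observation for item~\ref{it:come}. Given these, items~\ref{it:ave}, \ref{it:rtl}, \ref{it:ctl} each collapse onto a hitting-time statement already recorded in \cite{AF14,LPW09,MT06}; only item~\ref{it:come}, where the constant-row-entropy hypothesis is dropped and so Theorem~\ref{thm:veloform} is unavailable, requires a genuinely new computation.

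For item~\ref{it:come} I would first extract the one consequence of the recursion that holds with no hypothesis on the rows: summing it against $\pi$, using $\pi P = \pi$, and cancelling the common off-diagonal terms on the two sides leaves $\pi_j H_{j,j} = \sum_i \pi_i H(P_{i,\cdot}) = H(X)$, i.e. $H_{j,j} = H(X)/\pi_j$ for every $j$ --- the entropic mirror of $\E_j(\tau_j^+) = 1/\pi_j$. Next, let $\mathbf{H}^0 = (H_{i,j})_{i,j}$ with its diagonal reset to $0$ and $M = (\E_i(\tau_j))_{i,j}$ (also $0$ on the diagonal). The two recursions then read $(I-P)\mathbf{H}^0 = \mathbf{h}\mathbf{1}^{\top} - H(X)\,D_\pi^{-1}$ and $(I-P)M = \mathbf{1}\mathbf{1}^{\top} - D_\pi^{-1}$, where $\mathbf{h} = (H(P_{i,\cdot}))_i$, $\mathbf{1}$ is the all-ones vector and $D_\pi = \mathrm{diag}(\pi)$. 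Subtracting $H(X)$ times the second from the first cancels the $D_\pi^{-1}$ terms and gives $(I-P)\bigl(\mathbf{H}^0 - H(X)M\bigr) = (\mathbf{h} - H(X)\mathbf{1})\mathbf{1}^{\top}$. Since $\pi^{\top}(\mathbf{h} - H(X)\mathbf{1}) = 0$, the system $(I-P)\mathbf{n} = \mathbf{h} - H(X)\mathbf{1}$ is solvable, with solution unique up to a multiple of $\mathbf{1}$; hence every column of $\mathbf{H}^0 - H(X)M$ is such an $\mathbf{n}$ plus a multiple of $\mathbf{1}$, and matching the vanishing diagonal forces $(\mathbf{H}^0 - H(X)M)_{i,j} = n_i - n_j$. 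This matrix is antisymmetric, so for $i\neq j$,
\[
H_{i,j} + H_{j,i} - H(X)\bigl(\E_i(\tau_j) + \E_j(\tau_i)\bigr) = (n_i - n_j) + (n_j - n_i) = 0,
\]
which is exactly $H^c_{i,j} = t^c_{i,j} H(X)$, with neither reversibility nor constant rows playing a role. (The same conclusion can be obtained probabilistically: concatenating an $i\to j$ trajectory with an independent $j\to i$ trajectory produces the $i,j$ commute excursion $Y_0=i,Y_1,\ldots,Y_L=i$, whose entropy --- expanded by the chain rule, using that its length $L$ is a function of the path --- equals $\E\bigl[\sum_{m=0}^{L-1} H(P_{Y_m,\cdot})\bigr]$, and a Wald/regeneration argument over the i.i.d.\ excursions of the chain from $i$ turns this into $H(X)\,\E[L] = H(X)\,t^c_{i,j}$.)

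The remaining three items follow from Theorem~\ref{thm:veloform}, which under Assumption~\ref{assump:constentr} gives $H_{i,j} = H(X)\E_i(\tau_j)$ for $i\neq j$ and $H_{i,i} = H(X)/\pi_i$. For item~\ref{it:ave}, summing $\pi_i\pi_j H_{i,j}$ and peeling off the diagonal gives $H^{av} = H(X)\bigl(\sum_{i\neq j}\pi_i\pi_j\E_i(\tau_j) + \sum_i\pi_i\bigr) = H(X)(t^{av}+1)$ since $\E_i(\tau_i)=0$; for reversible $X$, the eigentime identity $t^{av} = \sum_{k=2}^{|\mathcal X|}(1-\lambda_k)^{-1}$ (\cite{AF14,LPW09}), combined with $0<(1-\lambda_k)^{-1}\le(1-\lambda_2)^{-1}=t^{rel}$ for $k\ge2$, gives $t^{rel}\le t^{av}\le(|\mathcal X|-1)t^{rel}$, and feeding this together with $H(X)\le\log|\mathcal X|$ into $H^{av}=(t^{av}+1)H(X)$ yields the displayed two-sided estimate. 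For item~\ref{it:rtl}, $\sum_j\pi_j H_{i,j} = H(X)\bigl(\sum_j\pi_j\E_i(\tau_j)+1\bigr)$ is independent of $i$ precisely because the classical random target lemma (\cite{LPW09}) says $\sum_j\pi_j\E_i(\tau_j)$ is. For item~\ref{it:ctl}, taking $i,j,k$ distinct (the case $i=k$ being immediate), Theorem~\ref{thm:veloform} rewrites $H_{i,j}+H_{j,k}+H_{k,i}-(H_{i,k}+H_{k,j}+H_{j,i})$ as $H(X)$ times $(\E_i(\tau_j)+\E_j(\tau_k)+\E_k(\tau_i))-(\E_i(\tau_k)+\E_k(\tau_j)+\E_j(\tau_i))$, which vanishes by the cyclic tour property of reversible chains (\cite{AF14}).

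The only step I expect to demand real care is item~\ref{it:come}: one must justify $H_{i,j}<\infty$ and the recursion itself (finiteness is routine for a finite irreducible chain, whose hitting times have geometric tails, cf.\ \cite{EC93}), and then the decisive point is to recognise that $H_{i,j}-H(X)\E_i(\tau_j)$ is a ``gradient'' $n_i-n_j$; it is this antisymmetry --- not any proportionality between $H_{i,j}$ and $\E_i(\tau_j)$ --- that forces the commute quantities to match even without Assumption~\ref{assump:constentr}. Everything in items~\ref{it:ave}--\ref{it:ctl} is then bookkeeping on top of Theorem~\ref{thm:veloform} and the cited hitting-time identities.
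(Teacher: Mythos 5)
Your proofs of items (2)--(4) are correct and essentially identical to the paper's: each is read off from Theorem \ref{thm:veloform} (with $\pi_i H_{i,i}=H(X)$ handling the diagonal in items (2) and (3)), and then reduced to the eigentime identity with $H(X)\le\log|\mathcal{X}|$, the classical random target lemma, and the cyclic tour lemma, exactly as in the paper. For item (1), however, you take a genuinely different route. The paper proves a standalone lemma (Lemma \ref{lem: Hij}) from the Ekroot--Cover closed form $H_{i,j}=K_{i,j}-K_{j,j}$ with $K=ZB$, rewriting the fundamental matrix as $Z_{i,k}=Z_{k,k}-\pi_k\E_i(\tau_k)$; this yields $H_{i,j}-\E_i(\tau_j)H(X)=\sum_k\pi_k(\E_j(\tau_k)-\E_i(\tau_k))H(P_{k,\cdot})$, which is antisymmetric in $(i,j)$, and adding the two orientations gives $H^c_{i,j}=t^c_{i,j}H(X)$ with no row-entropy assumption. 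You obtain the same antisymmetric ``gradient'' structure without invoking the $Z$-matrix formula at all: from the first-step recursions $(I-P)\mathbf{H}^0=\mathbf{h}\mathbf{1}^{\top}-H(X)D_\pi^{-1}$ and $(I-P)M=\mathbf{1}\mathbf{1}^{\top}-D_\pi^{-1}$ (both of which check out, including the diagonal entries, once one knows $H_{j,j}=H(X)/\pi_j$, which you rederive by summing the entropy recursion against $\pi$ instead of citing Theorem 1 of the Ekroot--Cover paper), you subtract, use that the eigenvalue $1$ of an irreducible $P$ is simple and that $\pi^{\top}(\mathbf{h}-H(X)\mathbf{1})=0$, and pin down the free constants with the zero diagonal to get $(\mathbf{H}^0-H(X)M)_{i,j}=n_i-n_j$; antisymmetry then gives item (1). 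Both arguments are valid. The paper's route buys the explicit identification of the gradient, $n_i=-\sum_k\pi_k\E_i(\tau_k)H(P_{k,\cdot})$, directly from the fundamental matrix, which is also what makes Theorem \ref{thm:veloform} a one-line consequence of the random target lemma; your route is more self-contained and elementary (it needs only the chain-rule recursion for trajectory entropy, whose finiteness you rightly flag and which is guaranteed by the geometric tails of hitting times of a finite irreducible chain), and as a bonus the same linear-algebra argument reproves Theorem \ref{thm:veloform} immediately when $\mathbf{h}$ is a constant vector, since then the right-hand side vanishes and the gradient is zero.
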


\begin{rk}[Relation between eigenvalues and entropy]
	The relation between eigenvalues and entropy is perhaps best illustrated by item \eqref{it:ave}. For reversible Markov chains, using the so-called eigentime identity \cite{CuiMao10,AF14}, we have
	$$H^{av} = (t^{av} + 1)H(X) = \left(\sum_{i=2}^n \dfrac{1}{1-\lambda_i} + 1\right) H(X).$$
	Another important point to note is that the relaxation time bounds are tight. The lower bound is exactly attained by the two-state Markov chain, while the upper bound is attained by a rank-one Markov chain with row equals to the probability mass function of discrete uniform distribution. We refer readers to Section \ref{sec:examples} for further details.
\end{rk}

The rest of the paper is organized as follow. In Section \ref{sec:proofs} we give the proofs of Theorem \ref{thm:veloform} and \ref{thm:mainresult2}. In Section \ref{sec:examples} we provide a few examples to illustrate these two main results.

\section{Proofs of the main results}\label{sec:proofs}

\subsection{Proof of Theorem \ref{thm:veloform}}

We first state a lemma that relates the trajectory entropy $H_{i,j}$ to mean hitting times $\E_i(\tau_j)$ and the stationary distribution $\pi$. This result is the key to our proof and relies crucially on \cite{EC93}.

\begin{lemma}\label{lem: Hij}
	For $i,j \in \mathcal{X}$, we have
	\begin{align*}
		H_{i,j} = \begin{cases}
		\sum_{k \in \mathcal{X}} \pi_k (\E_j(\tau_k) - \E_i(\tau_k))H(P_{k,\cdot})+ \mathbb{E}_i(\tau_j) H(X), & \text{for } i \neq j,\\
		\mathbb{E}_i(\tau_i^+) H(X), & \text{for } i = j.
		\end{cases}
	\end{align*}
\end{lemma}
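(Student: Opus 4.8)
The plan is to establish the lemma in three steps: first, a ``local accounting'' identity expressing $H_{i,j}$ as a sum of the one-step local entropies $H(P_{k,\cdot})$ weighted by expected visit counts; second, the classical Green's-function formula that rewrites those visit counts in terms of mean hitting times; and third, an algebraic substitution. The substance is concentrated in the first step, which is where \cite{EC93} enters; the rest is bookkeeping.

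\emph{Step 1: a visit-count representation of $H_{i,j}$.} Fix $i\neq j$ and regard the trajectory $T_{i,j}$ as the path $(X_0,X_1,\dots,X_{\tau_j})$ padded after time $\tau_j$ with a cemetery symbol $\dagger$; write $(Y_n)_{n\geq 0}$ for the padded sequence. Since $(Y_n)_{n\ge0}$ and $T_{i,j}$ are in measurable bijection, $H_{i,j}=H\big((Y_n)_{n\ge0}\big)=\sum_{n\ge1}H(Y_n\mid Y_0,\dots,Y_{n-1})$ by the chain rule (all summands are nonnegative, so there is no convergence subtlety, and finiteness of $H_{i,j}$ is recovered a posteriori from $H(P_{k,\cdot})\le\log|\mathcal{X}|$ and $\sum_k N_k^{(i,j)}=\E_i(\tau_j)<\infty$). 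Conditionally on $Y_0^{n-1}$ the event $\{\tau_j\ge n\}=\{Y_{n-1}\notin\{j,\dagger\}\}$ is determined; on it $Y_n=X_n$ has conditional law $P_{X_{n-1},\cdot}$ by the Markov property, and off it $Y_n=\dagger$ is deterministic. Hence $H(Y_n\mid Y_0^{n-1})=\sum_{k}\mathbb{P}_i(X_{n-1}=k,\tau_j>n-1)\,H(P_{k,\cdot})$, and summing over $n$ (Tonelli, nonnegative terms) gives
\[
H_{i,j}=\sum_{k\in\mathcal{X}}N_k^{(i,j)}\,H(P_{k,\cdot}),\qquad N_k^{(i,j)}:=\E_i\Big[\textstyle\sum_{n=0}^{\tau_j-1}\1\{X_n=k\}\Big].
\]
The identical argument applied to the return trajectory $T_{j,j}=(X_0,\dots,X_{\tau_j^+})$ yields $H_{j,j}=\sum_k N_k^{(j,j^+)}H(P_{k,\cdot})$ with $N_k^{(j,j^+)}:=\E_j\big[\sum_{n=0}^{\tau_j^+-1}\1\{X_n=k\}\big]$.

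\emph{Steps 2 and 3: substitute hitting-time formulae.} For $i\neq j$, the expected visit counts obey the standard relation $N_k^{(i,j)}=\pi_k\big(\E_i(\tau_j)+\E_j(\tau_k)-\E_i(\tau_k)\big)$, a consequence of the fundamental-matrix / Green's-function identities for finite irreducible chains (see e.g.\ \cite{AF14}; note it correctly returns $N_j^{(i,j)}=0$). Inserting this into Step 1 and splitting the sum,
\[
H_{i,j}=\E_i(\tau_j)\sum_k\pi_k H(P_{k,\cdot})+\sum_k\pi_k\big(\E_j(\tau_k)-\E_i(\tau_k)\big)H(P_{k,\cdot})=\E_i(\tau_j)H(X)+\sum_k\pi_k\big(\E_j(\tau_k)-\E_i(\tau_k)\big)H(P_{k,\cdot}),
\]
which is the first case of the lemma. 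For $i=j$, the elementary cycle formula for the stationary distribution gives $N_k^{(j,j^+)}=\pi_k\E_j(\tau_j^+)=\pi_k/\pi_j$ by Kac's formula, so $H_{j,j}=\pi_j^{-1}\sum_k\pi_k H(P_{k,\cdot})=H(X)/\pi_j=\E_j(\tau_j^+)H(X)$; alternatively one reads this off directly by multiplying the first-step recursion $H_{i,j}=H(P_{i,\cdot})+\sum_{k\neq j}P_{i,k}H_{k,j}$ on the left by $\pi$.

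\emph{Main obstacle.} The only place calling for genuine care is Step 1: making the chain-rule-over-a-stopped-sequence argument airtight (the bijection with the $\dagger$-padded sequence, the past-measurability of $\{\tau_j\ge n\}$, and the bookkeeping that separates $\tau_j$ from $\tau_j^+$ in the diagonal case). Everything afterwards is routine; it is worth flagging that the correction term $\sum_k\pi_k(\E_j(\tau_k)-\E_i(\tau_k))H(P_{k,\cdot})$ vanishes under Assumption \ref{assump:constentr} precisely because of the random target lemma ($\sum_k\pi_k\E_i(\tau_k)$ does not depend on $i$), which is exactly what produces Theorem \ref{thm:veloform}.
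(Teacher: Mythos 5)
Your proof is correct, but it takes a genuinely different route from the paper. The paper treats the closed-form matrix expression from \cite{EC93} as its starting point: it quotes $H_{i,j}=K_{i,j}-K_{j,j}$ with $K=ZB$, $Z=(I-P+\Pi)^{-1}$, $B_{k,j}=H(P_{k,\cdot})-\tfrac{H(X)}{\pi_k}\1_{k=j}$, and then converts fundamental-matrix entries into hitting times via $Z_{i,k}=Z_{k,k}-\pi_k\E_i(\tau_k)$, so that $Z_{i,k}-Z_{j,k}=\pi_k(\E_j(\tau_k)-\E_i(\tau_k))$ and the $\1_{k=j}$ term in $B$ produces $\E_i(\tau_j)H(X)$; the diagonal case is quoted directly from \cite[Theorem 1]{EC93} together with Kac's formula. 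You instead re-derive the underlying structural identity from first principles: the chain-rule/stopped-trajectory argument gives the occupation-measure representation $H_{i,j}=\sum_k N_k^{(i,j)}H(P_{k,\cdot})$, and the standard Green's-function identity $N_k^{(i,j)}=\pi_k\bigl(\E_i(\tau_j)+\E_j(\tau_k)-\E_i(\tau_k)\bigr)$ (and $N_k^{(j,j^+)}=\pi_k/\pi_j$ for the diagonal case) then yields the lemma. Both identities you invoke are correct and standard (they are in \cite{AF14}), and your substitution reproduces the stated formula exactly, including the vanishing correction term under Assumption \ref{assump:constentr}. What your approach buys is self-containedness and a transparent probabilistic interpretation (trajectory entropy as local entropy weighted by expected visits), at the cost of having to make the entropy chain rule over the $\dagger$-padded stopped sequence rigorous, which you rightly flag and which can be done cleanly, e.g.\ by writing $H_{i,j}=\E_i\bigl[\sum_{n=0}^{\tau_j-1}-\log P_{X_n,X_{n+1}}\bigr]$ and conditioning stepwise; the paper's route is shorter but leans entirely on the matrix formula of \cite{EC93} as a black box.
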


\begin{proof}
	The result in the case when $i = j$ can be directly obtained from existing results, since
	$$H_{i,i} = \dfrac{H(X)}{\pi_i} = \mathbb{E}_i(\tau_i^+) H(X),$$
	where the first equality follows from \cite[Theorem $1$]{EC93}, and the second equality is the well-known identity that the mean first return time of state $i$ is the inverse of $\pi_i$. 
	We proceed to consider the case when $i \neq j$. We first recall a result from \cite{EC93}, where $H_{i,j}$ can be formulated as, for $k \in \mathcal{X}$,
	\begin{align*}
		H_{i,j} &= K_{i,j} - K_{j,j}, \\
		K_{i,j} &= (ZB)_{i,j}, \\
		Z_{i,k} &= (I - P + \Pi)^{-1}_{i,k}, \\
		B_{k,j} &= H(P_{k,\cdot}) - \dfrac{H(X)}{\pi_k} \1_{k = j}, \\
		\Pi_{k,j} &= \pi_j,
	\end{align*}
	where $\1$ is the indicator function. Note that $Z = (Z_{i,k})_{i,k \in \mathcal{X}}$ is the fundamental matrix of the Markov chain $X$ and from \cite{AF14} it can be rewritten as
	\begin{align*}
		Z_{i,k} = \sum_{n = 0}^{\infty} P^n_{i,k} - \pi_k = Z_{k,k} - \pi_k \E_i(\tau_k).
	\end{align*}
	Plugging in this formula of $Z$ into $H$, we have
	\begin{align*}
		H_{i,j} &= \sum_{k \in \mathcal{X}} (Z_{i,k} - Z_{j,k}) B_{k,j} \\ 
		&= \sum_{k \in \mathcal{X}}  \pi_k (\E_j(\tau_k) - \E_i(\tau_k)) B_{k,j}  \\
		&= 	\sum_{k \in \mathcal{X}} \pi_k (\E_j(\tau_k) - \E_i(\tau_k))H(P_{k,\cdot})+ \mathbb{E}_i(\tau_j) H(X).
	\end{align*}
\end{proof}

With the above lemma in mind, we return to the proof of Theorem \ref{thm:veloform}, and it suffices for us to prove the case when $i \neq j$. Under the constant row entropy assumption \ref{assump:constentr}, by writing $H(P_{k,\cdot}) = H(P_{0,\cdot})$, the first term of $H_{i,j}$ in Lemma \ref{lem: Hij} can be simplified to
\begin{align*}
H_{i,j} &= \sum_{k \in \mathcal{X}} \pi_k (\E_j(\tau_k) - \E_i(\tau_k))H(P_{k,\cdot})+ \mathbb{E}_i(\tau_j) H(X) \\
&= H(P_{0,\cdot}) \sum_{k \in \mathcal{X}} \pi_k (\E_j(\tau_k) - \E_i(\tau_k)) + \E_i(\tau_j) H(X) = \E_i(\tau_j) H(X),
\end{align*}
where the third equality follows from the random target lemma, see for example \cite[Lemma $10.1$]{LPW09}.

\subsection{Proof of Theorem \ref{thm:mainresult2}}

We first prove item \eqref{it:come}. Note that
\begin{align*}
H^{c}_{i,j} &= H_{i,j} + H_{j,i} \\
			&= \sum_{k \in \mathcal{X}} \pi_k (\E_j(\tau_k) - \E_i(\tau_k))H(P_{k,\cdot})+ \mathbb{E}_i(\tau_j) H(X) + \sum_{k \in \mathcal{X}} \pi_k (\E_i(\tau_k) - \E_j(\tau_k))H(P_{k,\cdot})+ \mathbb{E}_j(\tau_i) H(X) \\
			&= t^c_{i,j} H(X),
\end{align*}
where the second equality follows from Lemma \ref{lem: Hij}. Next, we prove item \eqref{it:ave}, which follows directly from Theorem \ref{thm:veloform} since
\begin{align*}
H^{av} &= \sum_{i,j \in \mathcal{X}} \pi_i \pi_j H_{i,j} \\
	   &= \sum_{i \neq j} \pi_i \pi_j \E_i(\tau_j) H(X) + \sum_i \pi_i^2 H_{i,i} \\
	   &= t^{av} H(X) + \sum_i \pi_i H(X) = (t^{av}+1)H(X).
\end{align*}
If $X$ is in addition reversible, denote by $1 = \lambda_1 > \lambda_2 \geq \ldots \geq \lambda_n$ the eigenvalues of $P$ arranged in non-increasing order and $n := |\mathcal{X}|$. Using the eigentime identity \cite{AF14,CuiMao10} and elementary estimate gives
$$t^{rel} = \dfrac{1}{1-\lambda_2} \leq t_{av} = \sum_{i = 2}^n \dfrac{1}{1-\lambda_i} \leq (n-1)t^{rel}.$$
This together with $H(X) \leq \log |\mathcal{X}|$ yields the desired result. We proceed to prove item \eqref{it:rtl}. Using Theorem \ref{thm:veloform} again, we have
$$\sum_{j \in \mathcal{X}} \pi_j H_{i,j} = \sum_{j \neq i} \pi_j \E_i(\tau_j) H(X) + \pi_i \E_i(\tau_j^+) H(X) = \sum_{j \neq i} \pi_j \E_i(\tau_j) H(X) + H(X),$$
which does not depend on $i$ by random target lemma (see e.g. \cite[Lemma $10.1$]{LPW09}). Finally, we prove item \eqref{it:ctl}, which follows from Theorem \ref{thm:veloform} together with the cyclic tour lemma of mean hitting times for reversible $X$, see \cite[Lemma $10.10$]{LPW09}.

\section{Examples}\label{sec:examples}

In this section, we investigate in detail a few examples that illustrate Theorem \ref{thm:veloform} and \ref{thm:mainresult2}.

\begin{example}[Symmetric two-state Markov chains]\label{ex:twostate}
	In our first example, we consider a reversible two-state Markov chain on $\mathcal{X} = \{0,1\}$ with $P_{0,0} = P_{1,1} = 1 - p$ and $P_{0,1} = P_{1,0} = p$, where $p \in (0,1)$. The case for $p = 0$ or $1$ is trivial since $H(X) = H_{i,j} = 0$. Note that two-state Markov chains are frequently used in the study of finite Markov chains. For instance in \cite{DSC96} it is used for studying the log-Sobolev inequality. Coming back to our example, the constant row entropy assumption \ref{assump:constentr} is satisfied since $H(X) = H(P_{0,\cdot}) = H(P_{1,\cdot}) = - p \log p - (1-p) \log (1-p)$. It is easy to see that
	$$\E_0(\tau_1) = \E_1(\tau_0) = \dfrac{1}{p}, \quad \lambda_2 = 1-p,$$
	and so Theorem \ref{thm:veloform} and \ref{thm:mainresult2} now read
	\begin{align*}
		H_{0,1} &= H_{1,0} = \dfrac{1}{p} \left( - p \log p - (1-p) \log (1-p)\right), \\
		H^c_{0,1} &= \dfrac{2}{p} \left( - p \log p - (1-p) \log (1-p)\right), \\
		(t^{rel} + 1) H(X) &= H^{av} = \left(\dfrac{1}{p}+1\right) \left( - p \log p - (1-p) \log (1-p)\right) \leq (t^{rel} + 1) \log 2.
	\end{align*}
	That is, the lower bound of item \eqref{it:ave} in Theorem \ref{thm:mainresult2} is attained, while the upper bound is attained if $p = 1/2$.
\end{example}

\begin{example}[Random walk on complete graphs]\label{ex:complete}
	In the second example, we consider the reversible random walk on a complete graph without self-loop on $\mathcal{X} = \{0,1,\ldots,n-1\}$ and $n \in \mathbb{N}$. More precisely, for $i \neq j \in \mathcal{X}$, we take $P_{i,i} = 0$ and $P_{i,j} = 1/(n-1)$, and the stationary distribution is well-known to be a discrete uniform $\pi_i = 1/n$. According to \cite[Chapter $5$ Example $9$]{AF14}, the mean hitting times and eigenvalues of this random walk are
	$$\E_i(\tau_j) = n-1, \quad \lambda_2 = \lambda_3 = \ldots = \lambda_n = -1/(n-1).$$
	It also has constant row entropy with $H(X) = H(P_{0,\cdot}) = \log (n-1).$
	Plugging in these expressions into Theorem \ref{thm:veloform} and \ref{thm:mainresult2} yields, for $i \neq j \in \mathcal{X}$,
	\begin{align*}
	H_{i,j} &= (n-1)\log(n-1), \quad H^c_{i,j} = 2(n-1)\log(n-1), \\
	H^{av} &= \left(\dfrac{(n-1)^2}{n} + 1\right) \log (n-1), \\
	(t^{rel} + 1) H(X) &= \left(\dfrac{n-1}{n} + 1\right) \log (n-1) \leq  H^{av} \leq \left(\dfrac{(n-1)^2}{n} + 1\right) \log n = ((n-1)t^{rel} + 1) \log n.
	\end{align*}
	Note that the relaxation time upper bound on average entropy gives the correct order of $O(n \log n)$.
\end{example}

\begin{example}[Rank-one Markov chains]\label{ex:rankone}
	The main purpose of this example is to illustrate the upper bound of item \eqref{it:ave} in Theorem \ref{thm:mainresult2} can be exactly attained. Suppose we are given a discrete distribution with probability mass function $\pi = (\pi_i)_{i=0}^{n-1}$ on $\mathcal{X} = \{0,1,\ldots,n-1\}$ and $n \in \mathbb{N}$. For all $i,j \in \mathcal{X}$, we take $P_{i,j} = \pi_j$. As the transition matrix $P$ clearly has rank one, the eigenvalues are $\lambda_2 = \lambda_3 = \ldots = \lambda_n = 0$. The constant entropy assumption \ref{assump:constentr} is also satisfied with
	$H(X) = H(P_{0,\cdot}) = - \sum_{j \in \mathcal{X}} \pi_j \log \pi_j$. As a result, using the eigentime identity, we can compute the average entropy $H^{av}$ as
	$$H^{av} = - n \sum_{j \in \mathcal{X}} \pi_j \log \pi_j \leq n \log n = ((n-1)t^{rel} + 1) \log n.$$
	The upper bound is therefore attained if $\pi$ is the discrete uniform distribution. In other words, within the class of rank-one Markov chains, the average entropy is maximized when $\pi$ is discrete uniform.
\end{example}

\begin{example}[Simple random walks on $n$-cycle]\label{ex:ncycle}
	In the final example, we consider a simple random walk on $\mathcal{X} = \{0,1,\ldots,n-1\}$ and $n \in \mathbb{N}$. The transition matrix is given by $P_{i,j} = 1/2$ if $j = (i+1) \mod n$ or $j = (i-1) \mod n$ and $P_{i,j} = 0$ otherwise. This random walk has been studied in \cite{LPW09,AF14}, with eigenvalues given by $(\cos(2\pi j /n))_{j=0}^{n-1}$. The constant row entropy assumption is also satisfied with $H(X) = H(P_{0,\cdot}) = \log 2$, and by \cite[Chapter $5$ Example $7$]{AF14}
	$$\E_0(\tau_i) = i(n-i), \quad t^{av} = \dfrac{n^2-1}{6}.$$
	Theorem \ref{thm:veloform} and \ref{thm:mainresult2} now yield
	$$H_{0,i} = i(n-i) \log 2, \quad H^{av} = \left(\dfrac{n^2-1}{6} + 1\right) \log 2.$$
\end{example}

\section*{Acknowledgement}
The author would like to thank the anonymous referee for a careful reading of the manuscript.

\bibliographystyle{abbrvnat}
\bibliography{thesis}

\begin{thebibliography}{9}
\providecommand{\natexlab}[1]{#1}
\providecommand{\url}[1]{\texttt{#1}}
\expandafter\ifx\csname urlstyle\endcsname\relax
  \providecommand{\doi}[1]{doi: #1}\else
  \providecommand{\doi}{doi: \begingroup \urlstyle{rm}\Url}\fi

\bibitem[Aldous and Fill(2002)]{AF14}
D.~Aldous and J.~A. Fill.
\newblock Reversible {M}arkov {C}hains and {R}andom {W}alks on {G}raphs, 2002.
\newblock Unfinished monograph, recompiled 2014, available at
  \url{http://www.stat.berkeley.edu/~aldous/RWG/book.html}.

\bibitem[Avrachenkov et~al.(2013)Avrachenkov, Cottatellucci, Maggi, and
  Mao]{ACMM13}
K.~Avrachenkov, L.~Cottatellucci, L.~Maggi, and Y.-H. Mao.
\newblock Maximum entropy mixing time of circulant {M}arkov processes.
\newblock \emph{Statist. Probab. Lett.}, 83\penalty0 (3):\penalty0 768--773,
  2013.

\bibitem[Cover and Thomas(2006)]{CJ06}
T.~M. Cover and J.~A. Thomas.
\newblock \emph{Elements of information theory}.
\newblock Wiley-Interscience [John Wiley \& Sons], Hoboken, NJ, second edition,
  2006.

\bibitem[Cui and Mao(2010)]{CuiMao10}
H.~Cui and Y.-H. Mao.
\newblock Eigentime identity for asymmetric finite {M}arkov chains.
\newblock \emph{Front. Math. China}, 5\penalty0 (4):\penalty0 623--634, 2010.

\bibitem[Diaconis and Saloff-Coste(1996)]{DSC96}
P.~Diaconis and L.~Saloff-Coste.
\newblock Logarithmic {S}obolev inequalities for finite {M}arkov chains.
\newblock \emph{Ann. Appl. Probab.}, 6\penalty0 (3):\penalty0 695--750, 1996.

\bibitem[Ekroot and Cover(1993)]{EC93}
L.~Ekroot and T.~M. Cover.
\newblock The entropy of {M}arkov trajectories.
\newblock \emph{IEEE Transactions on Information Theory}, 39\penalty0
  (4):\penalty0 1418--1421, Jul 1993.

\bibitem[Kafsi et~al.(2013)Kafsi, Grossglauser, and Thiran]{KGT13}
M.~Kafsi, M.~Grossglauser, and P.~Thiran.
\newblock The entropy of conditional {M}arkov trajectories.
\newblock \emph{IEEE Trans. Inform. Theory}, 59\penalty0 (9):\penalty0
  5577--5583, 2013.

\bibitem[Levin et~al.(2009)Levin, Peres, and Wilmer]{LPW09}
D.~A. Levin, Y.~Peres, and E.~L. Wilmer.
\newblock \emph{Markov chains and mixing times}.
\newblock American Mathematical Society, Providence, RI, 2009.

\bibitem[Montenegro and Tetali(2006)]{MT06}
R.~Montenegro and P.~Tetali.
\newblock Mathematical aspects of mixing times in {M}arkov chains.
\newblock \emph{Found. Trends Theor. Comput. Sci.}, 1\penalty0 (3):\penalty0
  x+121, 2006.

\end{thebibliography}

\end{document}